\title{The excluded minor structure theorem with planarly embedded wall}
\author{Bojan Mohar\thanks{Supported in part by ARRS,  
   Research Program P1-0297, by an NSERC Discovery Grant and
   by the Canada Research Chair program.
   On leave from Department of Mathematics,
   University of Ljubljana, Ljubljana, Slovenia.
   Email address: {\tt mohar@sfu.ca}}\\
   Department of Mathematics\\
   Simon Fraser University\\ 
   Burnaby, B.C. V5A 1S6, Canada
}
\newcommand{\DEF}[1]{{\em #1\/}}
\newtheorem{theorem}{\bf Theorem}[section]
\newtheorem{lemma}[theorem]{\bf Lemma}
\def\slika #1{\begin{center}\hskip 0.1mm\epsffile{#1}\end{center}}
\newenvironment{proof}%
{\noindent{\bf Proof.}\ }%
{\hfill$\Box$\par\bigskip}%
\renewcommand\a{\alpha}
\begin{document}

\date{\today}
\maketitle

\begin{abstract}
A graph is ``nearly embedded'' in a surface if it consists of graph $G_0$ that is
embedded in the surface, together with a bounded number of vortices having no 
large transactions. It is shown that every large wall (or grid minor) in a nearly
embedded graph, many rows of which intersect the embedded subgraph $G_0$ of the 
near-embedding, contains a large subwall that is planarly embedded within $G_0$.
This result provides some hidden details needed for a strong version of
the Robertson and Seymour's excluded minor theorem as presented in \cite{BKMM}.
\end{abstract}


\section{Introduction}

A graph is a \DEF{minor} of another graph if
the first can be obtained from a subgraph of the second by
contracting edges. One of the highlights of the 
graph minors theory developed by Robertson and Seymour is the
Excluded Minor Theorem (EMT) that describes a rough structure of graphs 
that do not contain a fixed graph $H$ as a minor. Two versions of
EMT appear in \cite{RS16,RS17}; see also \cite{Di} and \cite{KMSurvey}.

In \cite{BKMM} and \cite{BKMM3} the authors used a strong version of 
EMT in which it is concluded that every graph without a fixed minor
and whose tree-width is large has a tree-like structure, whose pieces are
subgraphs that are almost embedded in some surface, and 
in which one of the pieces contains a large grid minor that is (essentially) 
embedded in a disk on the surface.
Although not explicitly mentioned, this version of EMT
follows from the published results of Robertson and Seymour 
\cite{RS17} by applying standard techniques of routings on surfaces.
Experts in this area are familiar with these techniques (that are also
present in Robertson and Seymour's work \cite{RS7}). However, 
they may be harder to digest for newcomers in the area, and thus
deserve to be presented in the written form. 
The purpose of this note is to provide a proof of an extended
version of EMT as stated in \cite[Theorem 4.2]{BKMM}.

It may be worth mentioning that the proof in \cite{BKMM} does not really need
the extended version of the EMT, but the proof in \cite{BKMM3} does.
Thus, this note may also be viewed as a support for the main proof in \cite{BKMM3}.

We assume that the reader is familiar with the basic notions of graph theory and
in particular with the basic notions related to graph minors; we refer to 
\cite{Di} for all terms and results not explained here.

\section{Walls in near-embeddings}

In this section, we present our main lemma, which shows that 
for every large wall (to be defined in the sequel) in a ``nearly embedded''
graph, a large subwall must be contained in the embedded subgraph of the
near-embedding. Let us first introduce the notion of the wall and some of its
elementary properties.

\begin{figure}[htb]
\epsfxsize=5.6truecm
\slika{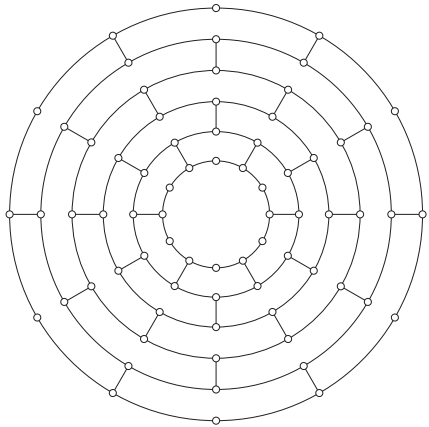}
\caption{The cylindrical 6-wall $Q_6$}
\label{fig:1}
\end{figure}

For an integer $r\ge3$, we define a \DEF{cylindrical $r$-wall} as a graph 
that is isomorphic to a subdivision of the graph $Q_r$ defined as follows. 
We start with vertex set $V = \{(i,j) \mid 1\le i \le r, \, 1\le j \le 2r \}$,
and make two vertices $(i,j)$ and $(i',j')$ adjacent
if and only if one of the following possibilities holds:
\begin{itemize}
\item[(1)] $i' = i$ and $j' \in \{j-1,j+1\}$, where the values $j-1$ and $j+1$
are considered modulo $2r$.
\item[(2)] $j' = j$ and $i' = i + (-1)^{i+j}$.
\end{itemize}
Less formally, $Q_r$ consists of $r$ disjoint cycles $C_1,\dots,C_r$ of length 
$2r$ (where $V(C_i)=\{(i,j)\mid 1\le j\le 2r\}$), called the \DEF{meridian cycles} 
of $Q_r$. Any two consecutive cycles $C_i$ and $C_{i+1}$ are joined by $r$ edges
so that the edges joining $C_i$ and $C_{i-1}$ interlace on $C_i$ with those
joining $C_i$ and $C_{i+1}$ for $1<i<r$.
Figure \ref{fig:1} shows the cylindrical 6-wall $Q_6$.

By deleting the edges joining vertices $(i,1)$ and $(i,2r)$ for $i=1,\dots,r$, we
obtain a subgraph of $Q_r$. Any graph isomorphic to a subdivision of this graph is
called an \DEF{$r$-wall}.

To relate walls and cylindrical walls to $(r\times r)$-grid minors, we state 
the following easy correspondence:

\begin{itemize}
\item[\rm (a)]
Every $(4r+2)$-wall contains a cylindrical $r$-wall as a subgraph.
\item[\rm (b)]
Every cylindrical $r$-wall contains an $(r\times r)$-grid as a minor.
\item[\rm (c)]
Every $(r\times r)$-grid minor contains an $\lfloor \tfrac{r-1}{2}\rfloor$-wall
as a subgraph.
\end{itemize}

\begin{lemma}
\label{lem:W1}
Suppose that\/ $1\le i<j\le r$ and let $t=j-i-1$. Let $S_i\subset C_i$ and 
$S_j\subseteq C_j$ be paths of length at least $2t-1$ in the meridian cycles 
$C_i,C_j$ of $Q_r$. Then\/ $Q_r$ contains $t$ disjoint paths linking
$C_i$ and $C_j$. Morover, for each of these paths and for every cycle $C_k$,
$i<k<j$, the intersection of the path with $C_k$ is a connected segment of $C_k$.
\end{lemma}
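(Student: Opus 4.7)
The plan is to prove the lemma by explicit construction. I assign coordinates $(k, c)$ to vertices of $Q_r$, where $k \in \{1, \dots, r\}$ indexes the meridian cycle and $c \in \{1, \dots, 2r\}$ is the column (read cyclically). The vertical edge $(k, c)$--$(k+1, c)$ exists precisely when $c \equiv k \pmod 2$.

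Since $S_i$ spans at least $2t$ consecutive columns of $C_i$, it contains exactly $t$ columns $a_1 < a_2 < \cdots < a_t$ of parity $i$, evenly spaced by $2$; these are the only columns from which edges descend to $C_{i+1}$. Analogously, $S_j$ contains exactly $t$ columns $e_1 < \cdots < e_t$ of parity $j-1$, the entry points of edges from $C_{j-1}$. After reindexing the columns and possibly reversing cyclic orientation, I may arrange that $e_\ell - a_\ell = \delta$ for a common horizontal shift $\delta$ satisfying $0 \le \delta \le r$ and $\delta \equiv t \pmod 2$.

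For each $\ell$, path $P_\ell$ begins at $(i, a_\ell)$, crosses down to $(i+1, a_\ell)$, then alternates horizontal walks and vertical crossings through the intermediate cycles, finally entering $C_j$ at $(j, e_\ell)$. On each intermediate $C_k$ it takes an odd horizontal shift $\Delta_k^{(\ell)}$, and the $t$ shifts sum to $\delta$. I choose the shifts as follows: if $\delta \le t$, take each $\Delta_k^{(\ell)} \in \{+1, -1\}$ with a common pattern for all $\ell$; if $\delta > t$, take $\Delta_k^{(\ell)} = +1$ for all intermediate $k \ne j-\ell$, and $\Delta_{j-\ell}^{(\ell)} = D := \delta - (t-1)$. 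The connectedness property is then immediate: each $P_\ell \cap C_k$ is a single horizontal arc.

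The remaining step is to check pairwise disjointness on each intermediate cycle $C_k$. When $\delta \le t$, the $t$ paths are mutual translates by $2$ columns, their segments on $C_k$ have length $1$, and disjointness is clear. When $\delta > t$, exactly one path ($P_{j-k}$) carries a large segment of length $D$ on $C_k$ while the other $t-1$ paths have length-$1$ segments; a direct calculation using $a_{\ell+1} - a_\ell = 2$ and the uniform $+1$ small shifts shows the small segments are pairwise disjoint with consecutive ones separated by a one-column gap, and that the gap between the two small segments straddling the large-shift position is exactly $D+1$ columns wide, accommodating $P_{j-k}$'s large segment with a one-column margin on each side. The main technical obstacle is this case analysis together with ruling out wraparound on the cyclic column coordinate; the bounds $|\delta| \le r$ and $t \le r - 2$ ensure the total span occupied on $C_k$ is strictly less than $2r$.
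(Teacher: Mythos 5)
Your construction is correct and is essentially a formalization of the paper's own proof, which consists only of the remark that the lemma is easy together with Figure~\ref{fig:5} depicting exactly such staircase paths from $S_i$ to $S_j$. (Minor quibble: your gap counts are off by one --- with entry columns spaced by $2$ and shifts $+1$, consecutive small segments $\{c,c+1\}$ and $\{c+2,c+3\}$ touch with no spare column, and the length-$D$ segment fills the slot between its neighbours exactly rather than with a one-column margin --- but the direct calculation you invoke does confirm pairwise disjointness, and the span bound $t+\delta\le 2r-2<2r$ correctly rules out wraparound.)
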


\begin{figure}[htb]
\epsfxsize=7.5truecm
\slika{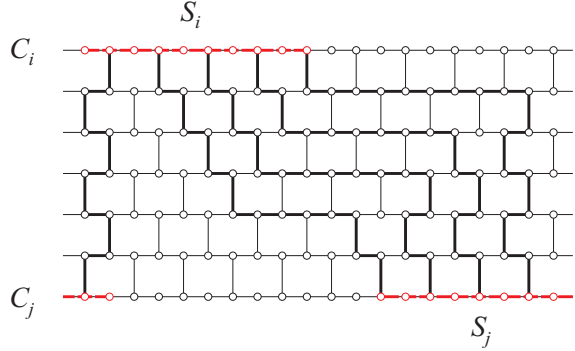}
\caption{Paths linking $S_i$ and $S_j$}
\label{fig:5}
\end{figure}

\begin{proof}
The lemma is easy to prove and the idea is illustrated in Figure~\ref{fig:5},
in which the edges on the left are assumed to be identified with the 
corresponding edges on the right. The paths are shown by thick lines and the 
segments $S_i$ and $S_j$ are shown by thick broken lines.
\end{proof}

A \DEF{surface} is a compact connected 2-manifold (with or without
boundary). The components of the boundary are called the \DEF{cuffs}.
If a surface $S$ has Euler characteristic $c$, then the non-negative number
$g=2-c$ is called the \DEF{Euler genus} of $S$. Note that a surface of
Euler genus $g$ contains at most $g$ cuffs.

Disjoint cycles $C,C'$ in a graph embedded in a surface $S$ are 
\DEF{homotopic} if there is a cylinder in $S$ whose boundary components are
the cycles $C$ and $C'$. The cylinder bounded by homotopic cycles $C,C'$ 
is denoted by $int(C,C')$.
Disjoint paths $P,Q$ whose initial vertices lie in the same cuff $C$ and
whose terminal vertices lie in the same cuff $C'$ in $S$ (possibly $C'=C$)
are \DEF{homotopic} if $P$ and $Q$ together with a segment in $C$ and a segment 
in $C'$ form a contractible closed curve $A$ in $S$.
The disk bounded by $A$ is be denoted by $int(P,Q)$.
The following basic fact about homotopic curves on a surface will be used
throughout (cf., e.g., \cite[Propositions 4.2.6 and 4.2.7]{MT}).

\begin{lemma}
\label{lem:homotopic}
Let $S$ be a surface of Euler genus $g$. Then every collection of more than
$3g$ disjoint non-contractible cycles contains two cycles that are homotopic.
Similarly, every collection of more than $3g$ disjoint paths, whose ends are 
on the same (pair of) cuffs in $S$, contains two paths that are homotopic.
\end{lemma}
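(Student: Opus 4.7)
For the cycle statement, the plan is to argue by contradiction via a standard cut-and-count strategy. Assume $C_1,\dots,C_k$ with $k>3g$ are pairwise disjoint, non-contractible, and pairwise non-homotopic. Cut $S$ along the union of the $C_i$ to obtain a surface-with-boundary $S'$; since cutting along cycles preserves Euler characteristic, $\sum_\Sigma \chi(\Sigma)=\chi(S')=\chi(S)=2-g$, where $\Sigma$ ranges over the components of $S'$.

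The hypotheses translate into two constraints on each component $\Sigma$ of $S'$: first, $\Sigma$ cannot be a disk whose boundary is a single cut cycle, for then that cycle would be contractible in $S$; second, $\Sigma$ cannot be a cylinder whose two boundary circles come from two distinct cut cycles $C_i,C_j$, for then $C_i$ and $C_j$ would be homotopic. Let $k_1,k_2$ be the number of one- and two-sided cycles among the $C_i$ (so $k=k_1+k_2$); the dual multigraph with pieces of $S'$ as vertices and two-sided cut cycles as edges must be connected since $S$ is, giving $p\le k_2+1$, where $p$ is the number of components of $S'$. Combining the Euler identity, the boundary count $\sum_\Sigma |\partial\Sigma|=b+k_1+2k_2$ (where $b$ is the number of cuffs of $S$), and a case split on the type of each piece --- disk (forbidden), cylinder with zero, one, or two cuff boundaries, M\"obius band, or more complex piece with $\chi(\Sigma)\le -1$ --- yields an inequality that forces $k\le 3g$, contradicting $k>3g$.

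For the path statement, the plan is to reduce to the cycle case. Cap each cuff of $S$ other than $C$ and $C'$ by a disk, decreasing the Euler genus by the number of caps; call the resulting surface $\tilde S$. If $C=C'$, close each path $P_i$ to a cycle $\tilde C_i$ by appending an arc of $C$ joining its terminal vertex back to its initial vertex, choosing the arcs so the $\tilde C_i$ stay pairwise disjoint. If $C\ne C'$, first attach an annulus joining $C$ to $C'$ (which preserves Euler characteristic) and then close each $P_i$ by a canonical arc in that annulus. In $\tilde S$, the cycles $\tilde C_i$ are pairwise disjoint and non-contractible, and two of them bound a cylinder in $\tilde S$ exactly when the corresponding paths $P_i,P_j$ are homotopic in the paper's sense. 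Since $k>3g\ge 3\tilde g$ where $\tilde g$ is the Euler genus of $\tilde S$, the cycle case applied to $\tilde S$ produces two homotopic cycles, hence two homotopic paths.

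The main obstacle is the cycle-side bookkeeping: one must correctly handle one-sided cycles (which contribute only one boundary circle per cut and can yield M\"obius-band pieces), cycles homotopic to cuffs of $S$ (at most one per cuff, yielding cylindrical pieces with one cut boundary and one cuff boundary), and two-sided cycles lying inside an annular neighborhood (yielding cylindrical pieces whose two boundaries come from the same cycle). Fortunately the bound $3g$ is fairly loose compared to tight bounds of the $3g-3$ type, so a somewhat coarse case analysis should suffice; one also needs to verify carefully that the path-to-cycle construction in the reduction preserves the cylinder-bounding relation, especially in the degenerate case $C=C'$, where the choice of return arcs must not create spurious homotopies.
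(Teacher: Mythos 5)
The paper does not actually prove this lemma; it is quoted as a known fact with a pointer to Propositions 4.2.6 and 4.2.7 of Mohar and Thomassen's \emph{Graphs on Surfaces}, so there is no internal proof to compare against. Judged on its own merits, your cycle half is the standard cut-and-count argument and is sound in outline: the forbidden pieces (disks bounded by a cut copy of some $C_i$, and annuli between two distinct cut cycles) are identified correctly, and since the true extremal count ($3h-3+2b$ for an orientable surface of genus $h$ with $b$ cuffs, i.e.\ well below $3g=6h+3b$) leaves ample slack, the bookkeeping you defer would indeed close. The one thing I would insist on is that you actually derive the final inequality rather than assert that it ``forces $k\le 3g$''; as written, the cycle half is a plan, not a proof.

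The path half, however, has a genuine gap: the disjoint closing arcs you need do not exist in general. In the case $C=C'$, the $2k$ endpoints of the paths lie on $C$ in some cyclic order, and the chord diagram they induce can be \emph{crossing}: on a torus with one cuff, two disjoint essential arcs realizing the two handle generators have interleaved endpoints $a_1,a_2,b_1,b_2$ on the cuff, and then every sub-arc of $C$ (or arc in a collar of $C$) joining $a_1$ to $b_1$ separates $a_2$ from $b_2$ in that collar, so the closed curves $\tilde C_1,\tilde C_2$ must intersect. Similarly, when $C\ne C'$, the cyclic order of the initial vertices on $C$ need not agree with the cyclic order of the terminal vertices on $C'$ (reroute one of several parallel arcs through a handle to transpose two of them), and mismatched orders force any system of connecting arcs inside your attached annulus to cross. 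So the reduction to the cycle case breaks exactly where you wave at it; the risk is not ``spurious homotopies'' but the failure of disjointness itself. The standard fix is to treat arcs directly: cut $S$ along the $k$ paths (each cut raises the Euler characteristic by one, so the cut surface has Euler characteristic $2-g+k$), observe that a component which is a disk bounded by copies of two paths together with one segment of $C$ and one of $C'$ would witness a homotopic pair, and run a separate count on the remaining piece types — or, equivalently, compute the Euler characteristic of a regular neighborhood of $C\cup C'\cup P_1\cup\cdots\cup P_k$. Either way, the path statement needs its own Euler-characteristic argument rather than the proposed reduction.
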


Let $G$ be a graph and let $W=\{w_0,\dots, w_n\}$, $n=|W|-1$, be a
linearly ordered subset of its vertices such that $w_i$ precedes
$w_j$ in the linear order if and only if $i<j$.  The pair $(G,W)$ is
called a \DEF{vortex} of \DEF{length} $n$, $W$ is the \DEF{society}
of the vortex and all vertices in $W$ are called \DEF{society vertices}.
When an explicit reference to the society is not needed, we will as well
say that $G$ is a vortex.
A collection of disjoint paths $R_1,\dots,R_k$ in $G$ is called 
a \DEF{transaction} of \DEF{order} $k$ in the vortex $(G,W)$ if 
there exist $i,j$ ($0\le i\le j\le n$) such that all paths have their 
initial vertices in $\{w_i,w_{i+1},\dots, w_j\}$ and their endvertices
in $W\setminus \{w_i,w_{i+1},\dots, w_j\}$. 

Let $G$ be a graph that can be expressed as $G=G_0\cup G_1\cup\cdots\cup G_v$,
where $G_0$ is embedded in a surface $S$ of Euler genus $g$ with $v$ cuffs
$\Omega_1,\dots, \Omega_v$, and $G_i$ ($i=1,\dots,v$) are pairwise disjoint 
vortices, whose society is equal to their intersection with $G_0$ and is 
contained in the cuff $\Omega_i$, with the order of the society being inherited 
from the circular order around the cuff. Then we say that $G$ is \DEF{near-embedded}
in the surface $S$ \DEF{with vortices} $G_1,\dots,G_v$.
A subgraph $H$ of a graph $G$ that is near-embedded in $S$ is said to be 
\DEF{planarly embedded} in $S$ if $H$ is contained in the embedded subgraph 
$G_0$, and there exists a cycle $C\subseteq G_0$ that is contractible in $S$ 
and $H$ is contained in the disk on $S$ that is bounded by $C$.
Our main result is the following.

\begin{theorem}
\label{th:main}
For every non-negative integers $g,v,a$ there exists a positive integer 
$s=s(g,v,a)$ such that the following holds.
Suppose that a graph $G$ is near-embedded in the surface $S$ with vortices 
$G_1,\dots,G_v$, such that the maximum order of transactions of
the vortices is at most $a$. Let $Q$ be a cylindrical $r$-wall contained in $G$,
such that at least $r_0\ge 3s$ of its meridian cycles have at least one edge
contained in $G_0$. Then $Q\cap G_0$ contains a cylindrical $r'$-wall   
that is planarly embedded in $S$ and has $r'\ge r_0/s$.
\end{theorem}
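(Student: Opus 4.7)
The plan is to eliminate meridians of $Q$ that traverse vortices, then use Lemma~\ref{lem:homotopic} to extract a large family of pairwise homotopic meridians lying in $G_0$, and finally apply Lemma~\ref{lem:W1} to reassemble them into a cylindrical sub-wall contained in a disk of $S$. Throughout, $s=s(g,v,a)$ will be chosen polynomial in $a$ and linear in $g$ and $v$, large enough to absorb the various losses.

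\emph{Step 1 (Purifying the wall).} For each vortex $(G_j,W_j)$, every meridian of $Q$ that meets $G_j$ outside the society contributes at least one path of $G_j$ with both endpoints on the cuff $\Omega_j$. Since the meridians of $Q$ are pairwise disjoint, these attached paths are pairwise disjoint, and a standard Dilworth-type argument on the cyclic arrangement of their endpoints on $\Omega_j$ produces, among any $m$ of them, either a nested sub-family or a pairwise crossing sub-family of size $\lceil\sqrt{m}\rceil$; in either case the resulting family forms a transaction of that order. The bound $a$ therefore forces $m\le a^2$, so at most $va^2$ meridians of $Q$ intrude into some vortex and at least $r_0-va^2$ of them lie entirely in $G_0$. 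An analogous analysis of the rungs of $Q$, combined with the flexibility in choosing linking paths provided by Lemma~\ref{lem:W1}, lets us further thin this set to a sub-family whose interconnecting paths can also be taken inside $G_0$.

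\emph{Step 2 (Homotopy reduction).} The surviving meridians are now pairwise disjoint cycles in $G_0\subseteq S$. Repeated application of Lemma~\ref{lem:homotopic} yields a sub-family $\mathcal C$ of at least $(r_0-va^2)/(3g+1)$ members that are pairwise homotopic in $S$. If they are all contractible, they nest as disks in $S$, and we retain only those lying inside a fixed outer member, whose bounded disk then contains the rest. If they are all homotopic and non-contractible, consecutive members bound annuli $int(C,C')$; using a linking path $R\subseteq G_0$ between $C$ and $C'$ from Step 1, the closed curve formed by $C$, $R$, and $C'$ is contractible in $S$ and bounds a disk that contains $int(C,C')$. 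Iterating produces a single contractible cycle of $G_0$ whose bounded disk contains the retained portion of $Q\cap G_0$.

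\emph{Step 3 (Reconstructing the cylindrical wall, and main obstacle).} Applying Lemma~\ref{lem:W1} to consecutive pairs $C,C'$ of $\mathcal C$ links them by the prescribed number of disjoint paths of $Q$, all lying in $G_0$ by Step 1; the resulting subgraph is a cylindrical $r'$-wall with $r'\ge (r_0-va^2)/(3g+1)\ge r_0/s$ for an appropriate $s=s(g,v,a)$, and by Step 2 it is planarly embedded in $S$. The principal obstacle is the rung-purification at the end of Step 1: killing meridians that intrude into a vortex is direct, but simultaneously arranging the interconnecting paths to stay within $G_0$ without destroying the wall's cylindrical structure requires a delicate interplay between the transaction bound and the rerouting permitted by Lemma~\ref{lem:W1}, which is what forces $s(g,v,a)$ to grow polynomially in $a$.
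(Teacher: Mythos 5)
Your Step~1 contains a genuine gap that the rest of the argument depends on. You claim that, because the vortex $(G_j,W_j)$ has no transaction of order more than $a$, at most $a^2$ disjoint paths of $Q$ can dip into $G_j$, since any $m$ disjoint paths with endpoints on the society contain a nested or pairwise crossing subfamily of size about $\sqrt{m}$. This is false: disjoint paths attached to a linearly ordered society fall into \emph{three} pairwise relations (nested, crossing, or \emph{separated}), and a large family of pairwise separated paths --- say $P_t$ joining $w_{2t-1}$ to $w_{2t}$ for $t=1,\dots,m$ --- admits no transaction of order more than $2$, because a single interval $\{w_i,\dots,w_j\}$ can separate the two ends of at most two such paths. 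So a vortex with bounded transactions can still be entered by arbitrarily many meridians of $Q$, and your conclusion that all but $va^2$ meridians lie entirely in $G_0$ does not follow. This is exactly why the paper does not discard meridians meeting vortices: it keeps every meridian with an edge in $G_0$, passes to a maximal segment $L_i$ of each that lies in $G_0$ between two cuff visits, classifies these segments by the (pair of) cuffs they join and by homotopy, and invokes the transaction bound only in configurations where the linkage structure of Lemma~\ref{lem:W1} forces many disjoint paths to cross a single cuff \emph{coherently}, which genuinely is a large transaction.

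Two further points. First, you yourself flag the ``rung purification'' at the end of Step~1 as the principal obstacle, but you do not carry it out; that step (arranging the connecting paths, building the nested sequence of contractible cycles via interlacing linkages, and deriving contradictions from forced transactions) is where essentially all of the work in the paper's proof resides, so deferring it leaves the proof incomplete even if Step~1 were repaired. Second, in Step~2 your treatment of the non-contractible homotopic case is wrong as stated: a cycle $C$, a linking path $R$, and a homotopic non-contractible cycle $C'$ do not bound a disk containing the cylinder $int(C,C')$ --- the cylinder is not planar. The correct conclusion (as in the paper) is only that the cylinder contains a large ordinary wall together with the linking paths, and that this wall, being planar, yields a large planarly embedded cylindrical subwall.
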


\begin{proof}
Let $C_{p_1},C_{p_2},\dots,C_{p_{r_0}}$ ($p_1< p_2<\cdots <p_{r_0}$) be meridian
cycles of $Q$ having an edge in $G_0$. For $i=1,\dots,r_0$, let $L_i$ be a maximal
segment of $C_{p_i}$ containing an edge in $E(C_{p_i})\cap E(G_0)$ and such that
none of its vertices except possibly the first and the last vertex are on a cuff. 
It may be that $L_i=C_{p_i}$ if $C_{p_i}$ contains at most one vertex on a cuff;
if not, then $L_i$ starts on some cuff and ends on (another or the same) cuff.
(We think of the meridian cycles to have
the orientation as given by the meridians in the wall.) 
At least $r_0/(v^2+1)$ of the segments $L_i$ either start and end up on the same 
cuffs $\Omega_x$ and $\Omega_y$ (possibly $x=y$), or are all cycles. 
In each case, we consider their homotopies. 
By Lemma \ref{lem:homotopic}, these segments contain a subset of 
$q \ge r_0/((3g+1)(v^2+1))$ homotopic segments (or cycles).
Since we will only be interested in these homotopic segments or cycles, we will
assume henceforth that $L_1,\dots,L_q$ are homotopic.

\begin{figure}[htb]
\epsfxsize=6.8truecm
\slika{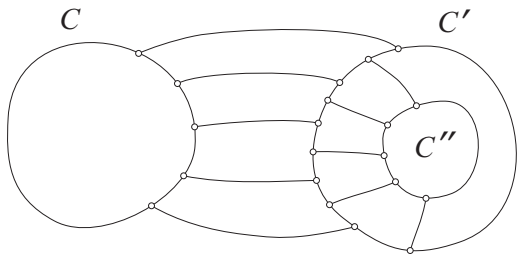}
\caption{Many contractible cycles}
\label{fig:4}
\end{figure}

Let us first look at the case when $L_1,\dots,L_q$ are cycles. Since $s=s(g,v,a)$
can be chosen to be arbitrarily large (as long as it only depends on the parameters),
we may assume that $q$ is as large as needed in the sequel. 
If the cycles $L_i$ are pairwise homotopic and non-contractible, then it is easy to see
that two of them bound a cylinder in $S$ containing many of these cycles. This cylinder
also contains the paths connecting these cycles; thus it contains a large planarly
embedded wall and hence also a large planarly embedded cylindrical wall. 
So, we may assume that the cycles $L_1,\dots,L_q$ are contractible.
By Lemma \ref{lem:W1}, $Q$ contains $t$ paths linking any two of these cycles that
are $t$ apart in $Q$, say $C=L_i$ and $C'=L_{i+t+1}$. (Here we take $t$ large enough
that the subsequent arguments will work.) Again, many of these paths either
reach $C'$ without intersecting any of the cuffs, or many reach 
the same cuff $\Omega$. 
A large subset of them is homotopic. In the former case,
the paths linking $C'$ with $C''=L_{i+2t+2}$ can be chosen so that their
initial vertices interlace on $C'$ with the end-vertices of of the
homotopic paths coming from $C$. This implies that $C$ or
$C''$ lies in the disk bounded by $C'$ (cf.\ Figure~\ref{fig:4}). By repeating 
the argument, we obtain a sequence of nested cycles and interlaced linkages 
between them. This clearly gives a large subwall, which contains a large 
cylindrical subwall that is planarly embedded. In the latter case, 
when the paths from $C$ to $C'$ go through the same cuff $\Omega_j$,
we get a contradiction since the vortex on $\Omega_j$ does not admit 
a transaction of large order, and thus too many homotopic paths cannot reach 
$C''$.

\begin{figure}[htb]
\epsfxsize=10truecm
\slika{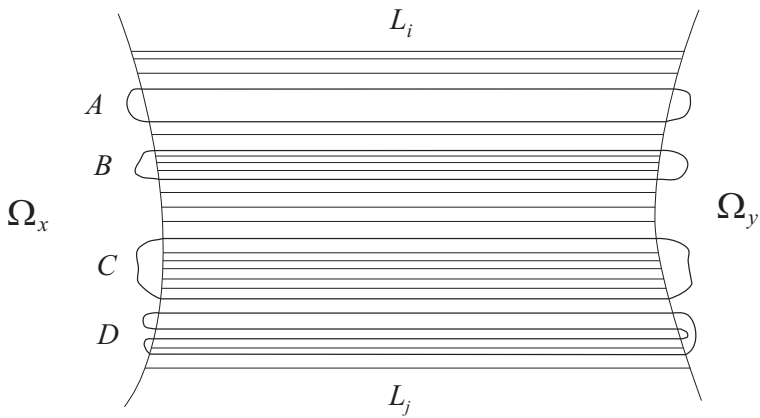}
\caption{Many homotopic segments joining two cuffs}
\label{fig:3}
\end{figure}

We get a similar contradiction as in the last case above, when too many homotopic
segments $L_i$ start and end up on the same cuffs $\Omega_x$ and $\Omega_y$.
We shall give details for the case when $x\ne y$, but the same approach
works also if $x=y$. (In the case when $x=y$ and the homotopic segments $L_i$ are
contractible, the proof is similar to the part of the proof given above.)

Let us consider the ``extreme'' segments $L_i, L_j$, whose disk
$int(L_i,L_j)$ contains many homotopic segments (cf.\ Figure~\ref{fig:3}).
Let us enumerate these segments as $L'_1=L_i,L'_2,\dots, L'_m=L_j$ in the order as
they appear inside $int(L_i,L_j)$. Let $C'_t$ (for $1<t<m$) be the meridian
cycle containing the segment $L'_t$. Since vortices admit no transactions of
order more than $a$, at most $4a$ of the cycles $C'_t$ ($1<t<m$)
can leave $int(L_i,L_j)$. By adjusting $m$, we may thus assume that none of them
does. In particular, each $L'_t$ has another homotopic segment in
$int(L_i,L_j)$. Since there are no transactions of order more than $a$, there is a
large subset of the cycles $C'_t$ that follow each other in $int(L_i,L_j)$
as shown by the thick cycles in Figure~\ref{fig:3}. Consider four of these
meridian cycles $A,B,C,D$ that are pairwise far apart in the wall $Q$ and 
appear in the order $A,B,C,D$ within $int(L_i,L_j)$. Then $A$ and $C$ are linked 
in $Q$ by a large collection of disjoint paths by Lemma \ref{lem:W1}. 
At most $8a$ of these paths can escape intersecting two fixed segments 
$L'_u$ and $L'_v$ of $B$ or two such segments of $D$ by passing through a vortex. 
All other paths linking $A$ and $C$ intersect either two segments of $B$ or two 
segments of $D$.
However, this is a contradiction since the paths linking $A$ and $C$ can be
chosen in $Q$ so that each of them intersects each meridian cycle in a connected
segment (Lemma~\ref{lem:W1}). This completes the proof.
\end{proof}

\section{The excluded minor structure}

In this section, we define some of the structures found in
Robertson-Seymour's Excluded Minor Theorem \cite{RS16} which
describes the structure of graphs
that do no contain a given graph as a minor. Robertson and Seymour
proved a strengthened version of that theorem that gives a more
elaborate description of the structure in \cite{RS17}. 
Our terminology follows that introduced in \cite{BKMM}.

Let $G_0$ be a graph. Suppose that $(G_1',G_2')$ is
a separation of $G_0$ of order $t\le 3$, i.e., $G_0 = G_1'\cup G_2'$,
where $G_1'\cap G_2' = \{v_1,\dots,v_t\}\subset
V(G_0)$, $1\le t\le 3$, $V(G_2')\setminus V(G_1')\ne\emptyset$.
Let us replace $G_0$ by the graph $G'$, which is obtained from
$G_1'$ by adding all edges $v_iv_j$ ($1\le i < j \le t$) if they are
not already contained in $G_1'$. We say that $G'$ has been obtained
from $G_0$ by an \DEF{elementary reduction}.
If $t=3$, then the 3-cycle $T=v_1v_2v_3$ in $G'$ is called
the \DEF{reduction triangle}. Every graph $G''$ that can be obtained
from $G_0$ by a sequence of elementary reductions is a \DEF{reduction}
of $G_0$.

We say that a graph $G_0$ can be \DEF{embedded} in a surface $\Sigma$ 
\DEF{up to $3$-separations} if there is a reduction $G''$ of $G_0$
such that $G''$ has an embedding in $\Sigma$ in which every
reduction triangle bounds a face of length 3 in $\Sigma$.

Let $H$ be an $r$-wall in the graph $G_0$ and let $G''$ be
a reduction of $G_0$. We say that the reduction $G''$ \DEF{preserves} $H$ 
if for every elementary reduction used in obtaining $G''$ from $G_0$, at most
one vertex of degree 3 in $H$ is deleted. (With the above notation,
$G_2'\setminus G_1'$ contains at most one vertex of degree 3 in $H$.)

\begin{lemma}
\label{lem:captured wall}
Suppose that\/ $G''$ is a reduction of the the graph $G_0$ and that\/
$G''$ preserves an $r$-wall $H$ in $G_0$. Then $G''$ contains
an $\lfloor (r+1)/3\rfloor$-wall, all of whose edges are contained
in the union of $H$ and all edges added to $G''$ when performing
elementary reductions.
\end{lemma}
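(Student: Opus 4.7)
My approach is to construct the required wall explicitly by taking a coarsening of $H$ and patching it with the triangles added by the reductions. I would begin by analyzing a single elementary reduction: it is given by a 3-separation with cut $\{v_1,v_2,v_3\}$, deletes the blob $V(G_2')\setminus V(G_1')$, and adds the triangle on $\{v_1,v_2,v_3\}$. By the preservation hypothesis, the blob contains at most one branch vertex $x$ of $H$; if $x$ is present, then each of its three $H$-paths to its branch neighbors $y_1,y_2,y_3$ must leave the blob through some cut vertex, so (after reindexing) through $v_i$. Thus in the reduced graph we retain three disjoint segments $v_i\to y_i$ attached to the triangle, and any chosen $v_i$ can serve as a substitute branch vertex for $x$: its three wall-edges are the surviving $H$-segment to $y_i$ together with two triangle-edge paths through $v_j,v_k$ followed by the surviving $H$-segments to $y_j,y_k$, and these three paths are internally disjoint.

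Next, I would pass to a natural $\lfloor (r+1)/3\rfloor$-subwall $H_0$ of the topological wall $H$, obtained by keeping every third row and every third column of branch vertices in the grid-like layout of $H$. Each edge of $H_0$ then corresponds to a path in $H$ through at most two intermediate $H$-branch vertices and the subdivision arcs between them. This sparseness supplies a two-unit buffer around every branch vertex of $H_0$ and along every $H_0$-edge, which is the slack needed to absorb the local effects of the reductions. I would realize $H_0$ as an actual subdivision in $G''$ by retaining each surviving branch vertex of $H_0$, replacing each deleted branch vertex of $H_0$ with an appropriately chosen vertex of the triangle from the reduction whose blob deleted it, and reconstructing each $H_0$-edge by concatenating surviving $H$-segments along it with triangle edges that bridge over the deleted pieces.

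The principal obstacle is verifying that these reconstructed branch vertices and paths form a valid subdivision of the $\lfloor (r+1)/3\rfloor$-wall graph, i.e.\ that the paths are pairwise internally disjoint. Since different reductions have vertex-disjoint blobs, each reduction's triangle is associated with a single ``local'' region of $H$; the coarseness factor $1/3$ ensures that this region intersects at most one branch vertex of $H_0$ and a bounded number of $H_0$-edges, so that the three edges of each triangle suffice to carry all the bridges that are needed locally without conflict. A subtle point is that a later reduction may delete a cut vertex of an earlier reduction, so the choice of substitute branch vertex (among the three triangle vertices) must be made consistently; this is handled by processing the reductions in reverse order and showing inductively that a valid choice is always available, thereby producing the desired $\lfloor (r+1)/3\rfloor$-wall in $G''$ whose edges lie entirely in $H$ together with the added triangle edges.
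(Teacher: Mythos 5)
Your proposal follows essentially the same route as the paper: both pass to the subwall obtained by keeping every third row and every third column of $H$, and both repair each elementary reduction by rerouting through the added triangle edges $v_iv_j$, using the preservation hypothesis to guarantee that at most one degree-3 vertex of $H$ is lost per reduction so that the three triangle vertices suffice as substitutes. The paper's own proof is in fact terser than yours, and your extra care about internal disjointness and about the interaction between successive reductions fills in details the paper dismisses as ``easy to see.''
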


\begin{figure}[htb]
\epsfxsize=4truecm
\slika{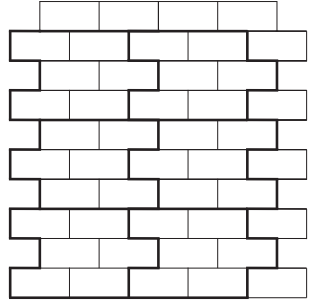}
\caption{Smaller wall contained in a bigger wall}
\label{fig:2}
\end{figure}

\begin{proof}
Let $H'$ be the subgraph of the $r$-wall $H$ obtained by taking every
third row and every third ``column''. See Figure \ref{fig:2} in which
$H'$ is drawn with thick edges.
It is easy to see that for every elementary reduction
we can keep a subgraph homeomorphic to $H'$ by replacing the edges
of $H'$ which may have been deleted by adding some of the edges $v_iv_j$
involved in the reduction.
The only problem would occur when we lose a vertex of degree 3 and when
all vertices $v_1,v_2,v_3$ involved in the elementary reduction would be
of degree 3 in $H'$. However, this is not possible since $G''$ preserves $H$.
\end{proof}

Suppose that for $i=0,\dots,n$, there exist vertex sets,
called \DEF{parts}, $X_i \subseteq V(G)$, with the following
properties:
\begin{itemize}
\item[(V1)] $X_i \cap W = \{w_i,w_{i+1}  \}$ for $i=0,\dots,n$, where
$w_{n+1}=w_n$,
\item[(V2)] $\bigcup_{0\le i\le n} X_i = V(G)$,
\item[(V3)] every edge of $G$ has both endvertices in some $X_i$, and
\item[(V4)] if $i \leq j \leq k$, then $X_i\cap X_k\subseteq X_j$.
\end{itemize}

\noindent Then the family $(X_i \, ;\, i=0,\dots,n)$ is called
a \DEF{vortex decomposition} of the vortex $(G,W)$.
For $i=1,\dots,n$, denote by $Z_i = (X_{i-1}\cap X_i)\setminus W$.
The \DEF{adhesion} of the vortex decomposition is the maximum of
$|Z_i|$, for $i=1,\dots,n$. The vortex decomposition is \DEF{linked}
if for $i=1,\dots,n-1$, the subgraph of $G$ induced on the vertex
set $X_i\setminus W$ contains a collection of disjoint paths linking
$Z_i$ with $Z_{i+1}$. Clearly, in that case $|Z_i|=|Z_{i+1}|$, and
the paths corresponding to $Z_i\cap Z_{i+1}$ are trivial.
Note that (V1) and (V3) imply that there are no edges between
nonconsecutive society vertices of the vortex.
Let us remark that every vortex $(G,W)$, in which $w_i,w_j$ are
non-adjacent for $|i-j|\ge2$, admits a linked vortex decomposition;
just take $X_i = (V(G)\setminus W) \cup \{w_i,w_{i+1} \}$.

The \DEF{$($linked$)$ adhesion of the vortex}
is the minimum adhesion taken over all (linked) decompositions of the
vortex. Let us observe that in a linked decomposition of adhesion
$q$, there are $q$ disjoint paths linking $Z_1$ with $Z_n$ in $G-W$.
For us it is important to note that a vortex with adhesion less than $k$
does not admit a transaction of order more than $k$.

Let $G$ be a graph, $H$ an $r$-wall in $G$, $\Sigma$ a surface, and
$\alpha\ge0$ an integer. We say that $G$ can be
\DEF{$\alpha$-nearly embedded} in $\Sigma$ if there is a set of at
most $\alpha$ cuffs $C_1,\dots ,C_b$ ($b\le \alpha$) in $\Sigma$,
and there is a set $A$ of at most $\alpha$ vertices of $G$ such that
$G - A$ can be written as $G_0 \cup G_1 \cup \cdots \cup G_b$ where
$G_0,G_1,\dots,G_b$ are edge-disjoint subgraphs of $G$ and the following
conditions hold:

\begin{itemize}
\item[(N1)] $G_0$ can be embedded in $\Sigma$ up to 3-separations 
with $G''$ being the corresponding reduction of $G_0$.
\item[(N2)] If $1 \leq i < j \leq b$, then $V(G_i) \cap V(G_j) = \emptyset$.
\item[(N3)] $W_i = V(G_0) \cap V(G_i) = V(G'') \cap C_i$
for every $i=1,\dots, b$.
\item[(N4)] For every $i = 1,\dots ,b$, the pair $(G_i,W_i)$ is a vortex
of adhesion less than $\alpha$, where the ordering of $W_i$ is consistent
with the (cyclic) order of these vertices on $C_i$.
\end{itemize}

The vertices in $A$ are called the \DEF{apex vertices} of the
\DEF{$\alpha$-near embedding}.
The subgraph $G_0$ of $G$ is said to be the \DEF{embedded subgraph}
with respect to the $\alpha$-near embedding and the decomposition
$G_0,G_1,\dots,G_b$. The pairs $(G_i,W_i)$, $i=1,\dots,b$, are the
\DEF{vortices} of the $\alpha$-near embedding. The vortex
$(G_i,W_i)$ is said to \DEF{be attached to the cuff $C_i$} of
$\Sigma$ containing $W_i$.

If $G$ is $\alpha$-near-embedded in $S$, let $G_0,G_1,\dots,G_b$ be as above
and let $G''$ be the reduction of $G_0$ that is embedded in $S$. 
If $H$ is an $r$-wall in $G$, we say that $H$ is 
\DEF{captured in the embedded subgraph} $G_0$
of the $\alpha$-near-embedding if $H$ is preserved in the reduction $G''$
and for every separation $G=K\cup L$ of order less than $r$, where 
$G_0\subseteq K$, at least $\tfrac{2}{3}$ of the degree-3 vertices of $H$
lie in $K$.

We shall use the following theorem which is a simplified version of one of the
cornerstones of Robertson and Seymour's theory of graph minors, the Excluded
Minor Theorem, as stated in \cite{RS17}. For a detailed explanation of how the
version in this
paper can be derived from the version in \cite{RS17}, 
see the appendix of \cite{BKMM}.

\begin{theorem}[Excluded Minor Theorem]
\label{th:RS}
For every graph $R$, there is a constant $\alpha$ such that for every
positive integer $w$, there exists a positive integer $r = r(R,\alpha,w)$,
which tends to infinity with $w$ for any fixed $R$ and $\alpha$, such that
every graph $G$ that does not contain an $R$-minor either has tree-width
at most $w$ or contains an $r$-wall\/ $H$ such that $G$
has an $\alpha$-near embedding in some surface $\Sigma$ in which $R$
cannot be embedded, and $H$ is captured in the embedded subgraph of the
near-embedding.
\end{theorem}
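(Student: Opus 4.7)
The plan is to deduce Theorem~\ref{th:RS} from the ``raw'' form of the Excluded Minor Theorem in \cite{RS17}, refined by Theorem~\ref{th:main} and Lemma~\ref{lem:captured wall}. Given $R$ and $w$, the raw version supplies a constant $\alpha_0 = \alpha_0(R)$ and a threshold $w^{\ast}$ depending on the desired wall size such that every $R$-minor-free graph $G$ of tree-width exceeding $w^{\ast}$ admits an $\alpha_0$-near embedding in some surface $\Sigma$ into which $R$ cannot be embedded, together with a very large $r_1$-wall $H_1 \subseteq G$. The Euler genus $g$ and the number of cuffs $v$ of $\Sigma$ are bounded in terms of $R$, so the constant $s = s(g,v,\alpha_0)$ from Theorem~\ref{th:main} will likewise be bounded purely in $R$.

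First I would discard the at most $\alpha_0$ apex vertices, producing a still-large sub-wall of $G - A = G_0 \cup G_1 \cup \cdots \cup G_b$, and use correspondence~(a) to extract a cylindrical $r_2$-wall $Q$. The main preparatory step for applying Theorem~\ref{th:main} is to ensure that at least $r_0 \ge 3s$ of the meridian cycles of some subwall of $Q$ contain an edge of $G_0$. The lever here is that each vortex $(G_i,W_i)$ has adhesion less than $\alpha_0$, whose linked vortex decomposition bounds the path-width of $G_i$ by a function of $\alpha_0$; combined with the fact that distinct vortices are vertex-disjoint by~(N2), and since any meridian cycle disjoint from $G_0$ must lie in a single vortex, a counting argument shows that only a number of meridian cycles bounded in $\alpha_0$ can be absorbed by the vortices. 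Discarding these cycles (and the rungs of $Q$ attached to them) leaves, for $r_2$ sufficiently large, a cylindrical subwall $Q'$ fulfilling the hypotheses of Theorem~\ref{th:main}. Applying that theorem yields a cylindrical $r'$-wall $Q''$ planarly embedded in $G_0$; since by definition a cylindrical $r'$-wall contains an $r'$-wall as a subgraph, we obtain a planarly embedded $r'$-wall $H$.

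It remains to check that $H$ can be chosen so as to be captured in the embedded subgraph. The separation condition is automatic: $H$ lies inside a closed disk $\Delta \subseteq \Sigma$ bounded by a cycle $C \subseteq G_0$, so $H \subseteq G_0$, and for any separation $G = K \cup L$ of order less than $r$ with $G_0 \subseteq K$ we have $H \subseteq K$, which trivially satisfies the $\tfrac{2}{3}$ requirement. For preservation in the reduction, I would start with $r'$ chosen larger than the final target $r$ and mirror the trimming of Lemma~\ref{lem:captured wall}: each elementary reduction of $G_0$ that touches the interior of $\Delta$ corresponds to a $3$-separation of $G_0$ across three vertices of $\Delta \cap G_0$, and thinning the wall to every third row and column (as in Figure~\ref{fig:2}) loses a factor of $3$ in the size but guarantees that at most one degree-$3$ vertex of the retained wall is ever killed. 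Composing the constants from \cite{RS17}, Theorem~\ref{th:main}, and the trimming then yields $r = r(R,\alpha,w) \to \infty$ as $w \to \infty$.

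I expect the principal technical obstacle to be the step forcing many meridian cycles to reach $G_0$: one has to convert bounded vortex adhesion (equivalently bounded transaction order) into a hard ceiling on the number of pairwise disjoint long meridian cycles a single vortex can host. The alternative route of arguing directly that a bounded-path-width graph cannot contain too many disjoint ``wall-like'' long cycles together with their rungs, via a transaction-style argument, should suffice once the set-up has been properly framed.
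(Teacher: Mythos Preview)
The paper does not prove Theorem~\ref{th:RS} at all: it is quoted as a black box from \cite{RS17}, with the translation between formulations deferred to the appendix of \cite{BKMM}. So there is no ``paper's proof'' to compare against; your task for this statement was only to cite it.

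More substantively, your proposal conflates Theorem~\ref{th:RS} with Theorem~\ref{thm:wall contained}. The conclusion of Theorem~\ref{th:RS} is merely that the wall $H$ is \emph{captured} in $G_0$, which by definition means (i) $H$ is preserved under the reductions forming $G''$, and (ii) any small separation with $G_0$ on one side keeps two thirds of the branch vertices of $H$ on that side. It does \emph{not} assert that $H$ lies in $G_0$, let alone that $H$ is planarly embedded. Your entire argument---passing to a cylindrical subwall, counting meridians that meet $G_0$, and invoking Theorem~\ref{th:main}---is aimed at producing a planarly embedded subwall. That is precisely the content of Theorem~\ref{thm:wall contained}(a)(b), and indeed your outline closely matches the paper's proof of \emph{that} theorem, which takes Theorem~\ref{th:RS} as input and applies Theorem~\ref{th:main} on top of it.

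Architecturally, invoking Theorem~\ref{th:main} to establish Theorem~\ref{th:RS} gets the dependency backwards: Theorem~\ref{th:RS} is prior work, and Theorem~\ref{th:main} is the paper's new tool used to strengthen it into Theorem~\ref{thm:wall contained}. If you genuinely wanted to derive the stated form of Theorem~\ref{th:RS} from the raw \cite{RS17} statement, the work is a matter of reconciling terminology (tangles vs.\ walls, the precise meaning of ``captured''), not surface topology; Theorem~\ref{th:main} plays no role there.
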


We can add the following assumptions about the $r$-wall in Theorem \ref{th:RS}.

\begin{theorem}
\label{thm:wall contained}
It may be assumed that the $r$-wall\/ $H$ in Theorem \ref{th:RS} has
the following properties:
\begin{itemize}
\item[\rm (a)]
$H$ is contained in the reduction $G''$ of the embedded subgraph $G_0$.
\item[\rm (b)]
$H$ is planarly embedded in $\Sigma$, i.e., every cycle in $H$ is
contractible in $\Sigma$ and the outer cycle of $H$ bounds a disk
in $\Sigma$ that contains $H$.
\item[\rm (c)] We may prespecify any constant $\rho$ and ask that
the face-width of $G''$ be at least $\rho$.
\item[\rm (d)] $G''$ is $3$-connected.
\end{itemize}
\end{theorem}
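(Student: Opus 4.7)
The plan is to apply Theorem~\ref{th:RS} with an inflated wall parameter $r^{*}=r^{*}(r,\rho,g,\alpha)$ and then successively refine both the $\alpha$-near embedding and the wall so that (a)--(d) all hold, each refinement costing only a constant factor in the wall's size. Throughout, the capture condition on the wall is the lever that keeps a large majority of the wall intact through every refinement.

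First, for (c) and (d), I modify the near-embedding to control the structure of $G''$. If the face-width of $G''$ in $\Sigma$ is less than $\rho$, there is a non-contractible simple closed curve in $\Sigma$ meeting $G''$ in fewer than $\rho$ points; I add these vertices to the apex set and cut $\Sigma$ along the curve. This yields a near-embedding in a surface of strictly smaller Euler genus (non-separating case), or splits off a simpler piece (separating case). Since the cut defines a separation of $G$ of order less than $\rho<r^{*}$, the capture condition forces at least two-thirds of the degree-$3$ wall vertices to lie on a single side, to which I restrict; iterating at most $g$ times brings the face-width to at least $\rho$ while increasing $\alpha$ by $O(g\rho)$ and simplifying the surface (so $R$ still does not embed in it). For (d), any remaining $1$- or $2$-separation of $G''$ either contains no degree-$3$ wall vertex---in which case the small side can be absorbed by one further elementary reduction that preserves the wall---or it contradicts capture once $r^{*}$ is large enough relative to $\alpha$.

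With (c) and (d) achieved, Lemma~\ref{lem:captured wall} gives (a) directly: since the wall is preserved in the reduction $G''$, there is a sub-wall of order $\lfloor (r^{*}+1)/3 \rfloor$ whose edges all lie in $G''$. I convert it to a cylindrical $r'$-wall $Q\subseteq G''$ via correspondence~(a) at the start of Section~2. For (b), form the auxiliary graph $\tilde G := G'' \cup G_1 \cup \cdots \cup G_v$ and treat it as a near-embedding in $\Sigma$ with embedded subgraph $\tilde G_0 := G''$ and vortices $G_1,\dots,G_v$; this is a genuine near-embedding in the sense of Section~2, with $v\le\alpha$ and transaction bound less than $\alpha$. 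Every meridian cycle of $Q$ has an edge in $\tilde G_0$, so Theorem~\ref{th:main} applied with $r_{0}=r'$ (and $r^{*}$ chosen so that $r_{0}\ge 3s(g,\alpha,\alpha)$) extracts a planarly embedded cylindrical sub-wall of size at least $r'/s(g,\alpha,\alpha)$. Converting back to an ordinary wall via correspondence~(c) yields the planarly embedded $r$-wall required, once $r^{*}$ is chosen large enough to absorb the successive constant-factor losses.

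The main obstacle is keeping these operations mutually compatible: they must be carried out in the right order. The face-width and $3$-connectivity refinements must precede the planar-wall extraction, because a later cut of the surface would potentially slice through an already-planarly-embedded wall. By doing them first and invoking capture at every step, enough of the wall survives each stage. The technical burden is the bookkeeping of the cumulative shrinkage of the wall's order together with the verification that the wall remains captured after every modification of the near-embedding.
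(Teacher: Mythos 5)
Your overall architecture matches the paper's: secure (c) and (d) first by enlarging the apex set and performing further elementary reductions, then use Lemma~\ref{lem:captured wall} and Theorem~\ref{th:main} to extract the planarly embedded wall. The treatment of (c) and (d) is essentially the paper's argument. However, there is a genuine gap in your step for (a). You claim that Lemma~\ref{lem:captured wall} ``gives (a) directly,'' producing a subwall all of whose edges lie in $G''$. The lemma does not do this, and cannot: the wall $H$ supplied by Theorem~\ref{th:RS} lives in the whole graph $G$ --- it may pass through the apex vertices and through the vortices $G_1,\dots,G_b$. Lemma~\ref{lem:captured wall} only compensates for the loss of edges caused by elementary reductions. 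After first removing the apex set (a step you never account for; it costs the wall up to $\alpha$ rows and columns) and then applying the lemma, all one can conclude is that the surviving wall lies in $G''\cup G_1\cup\cdots\cup G_b$, not in $G''$ alone.

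This matters because your verification of the hypothesis of Theorem~\ref{th:main} --- ``every meridian cycle of $Q$ has an edge in $\tilde G_0$'' --- rests on that false premise. The correct justification, and the one place where the capture condition is genuinely needed in this half of the proof, is that capture (the requirement that at least $\tfrac23$ of the degree-$3$ vertices of $H$ lie on the $G_0$-side of every separation of order less than $r$ containing $G_0$) forces \emph{many}, though not necessarily all, meridian cycles of $Q$ to have an edge in $G''$. Theorem~\ref{th:main} is stated precisely to accommodate this weaker hypothesis: it asks only that $r_0\ge 3s$ meridians meet $G_0$ and returns a planarly embedded cylindrical wall of size $r_0/s$. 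With that repair, properties (a) and (b) come out of Theorem~\ref{th:main} simultaneously, exactly as in the paper; the quantitative bookkeeping you describe is then fine.
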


\begin{proof}
The starting point is Theorem \ref{th:RS}. By making additional elementary
reductions if necessary, we can achieve (d). 
The property (c) is attained as follows.
If the face-width is too small, then there is a set of less than $\rho$
vertices whose removal reduces the genus of the embedding of $G''$.
We can add these vertices in the apex set and repeat the procedure
as long as the face-width is still smaller than $\rho$. The only subtlety here
is that the constant $\alpha$ in Theorem \ref{th:RS} now depends
not only on $R$ but also on $\rho$. See also \cite{KMSurvey}.

After removing the apex set $A$, we are left with an $(r-\a)$-wall in $G-A$. 
By applying Lemma \ref{lem:captured wall}, we may assume that $H$ is contained in the
reduced graph $G''\cup G_1\cup\cdots\cup G_b$. 
The wall $H$ contains a large cylindrical wall $Q$.
Since the vortices have bounded adhesion, they do not have large transactions.
Since the wall is captured in $G''$, edges of many meridians of $Q$ lie in $G''$.
Therefore, we can apply Theorem \ref{th:main} for the near embedding 
of the reduced graph
together with the vortices. This shows that a large cylindrical subwall of $Q$ 
is planarly embedded in the surface. The size $r'$ of this smaller wall still
satisfies the condition that $r' = r'(R,\alpha,w) \to \infty$ as $w$ increases.
\end{proof}

It is worth mentioning that there are other ways to show that a graph with large
enough tree-width that does not contain a fixed graph $R$ as a minor contains a
subgraph that is $\alpha$-near-embedded in some surface $\Sigma$ in which $R$
cannot be embedded, and moreover, there is an $r$-wall planarly embedded in
$\Sigma$ (after reductions taking care of at most 3-separations). Let us describe
two of them:
\begin{itemize}
\item[(A)] {\em Large face-width argument\/}:
One can use property (c) in Theorem \ref{thm:wall contained} that the face-width 
$\rho$ can be made as large as we want if $\alpha=\alpha(R,w,\rho)$ is large
enough. Once we have that, it follows from \cite{RS7} that there is a planarly
embedded $r$-wall, where $r=r(R,\rho)\to\infty$ as $\rho\to\infty$.
While this easy argument is sufficient for most applications, it appears to
be slightly weaker than Theorem \ref{thm:wall contained} since the quantifiers 
change. The difference is that the number of apex vertices
is no longer bounded as a function of $\alpha=\alpha(R)$ but rather as a function
depending on $R$ and $r$, where the upper bound has linear dependence on $r$,
i.e. it is of the form $\beta(R)r$. However, other parameters of the near-embedding
keep being only dependent on $R$. 
\item[(B)] {\em Irrelevant vertex\/}:
The third way of establishing the same result is to go through the proof of
Robertson and Seymour that there is an {\em irrelevant vertex}, i.e. a vertex $v$
such that $G$ has an $R$-minor if and only if $G-v$ has. (Compared to the later,
more abstract parts of the graph minors series of papers, this part is very clean 
and well understood; it could (and should) be explained in a(ny) serious graduate course 
on graph minors.) In that proof, one starts with an arbitrary wall $W$ that is 
large enough. A large wall exists since the tree-width is large. Then one compares
the $W$-bridges attached to $W$. They may give rise to $\le3$-separations, to 
{\em jumps} (paths in bridges whose addition to $W$ yields a nonplanar graph),
{\em crosses} (pairs of disjoint paths attached to the same planar face of $W$
whose addition to $W$ yields a nonplanar graph). If there are many disjoint jumps
or crosses on distinct faces of $W$, one can find an $R$-minor. If there are just 
a few, there is a large planar wall. If there are many of them on the same face,
we get a structure of a vortex with bounded transactions (or else an $R$-minor
can be discovered).
The proof then discusses ways for many jumps and crosses but no large
subset of them being disjoint. One way is to have a small set of vertices whose 
removal destroys most of these jumps and crosses. This gives rise to the apex 
vertices. The final conclusion is that the jumps and crosses can affect only 
a bounded part of the wall, so after the removal of the apex vertices and after
elementary reductions which eliminate $\le3$-separations, 
there is a large subwall $W_0$ such that no jumps or crosses are
involved in it. The ``middle'' vertex in $W_0$ is then shown to be irrelevant.

For our reference, only this {\em planar} wall is needed. By being planar, we mean 
that the rest of the graph is attached only to the outer face of this wall.
Then we define the tangle corresponding to this wall and the proof of the
EMT preserves this tangle while making the modifications yielding to
an $\alpha$-near-embedding.
\end{itemize}

\end{document}